\def\w*lim{\mathop{\mbox{\textup{w*-lim}}}}
\newtheorem{theorem}{\sc \textbf{Theorem}}[section]
\newtheorem{lemma}[theorem]{\sc \textbf{Lemma}}
\newcounter{cnt1}
\newcounter{cnt2}
\newcounter{cnt3}
\newcounter{cnt4}
\newcommand{\blr}{\begin{list}{$($\roman{cnt1}$)$} {\usecounter{cnt1}
 \setlength{\topsep}{0pt} \setlength{\itemsep}{0pt}}}
\newcommand{\blR}{\begin{list}{\Roman{cnt4}.\ } {\usecounter{cnt4}
 \setlength{\topsep}{0pt} \setlength{\itemsep}{0pt}}}
\newcommand{\bla}{\begin{list}{$($\alph{cnt2}$)$} {\usecounter{cnt2}
 \setlength{\topsep}{0pt} \setlength{\itemsep}{0pt}}}
\newcommand{\bln}{\begin{list}{$($\arabic{cnt3}$)$} {\usecounter{cnt3}
 \setlength{\topsep}{0pt} \setlength{\itemsep}{0pt}}}
\newcommand{\el}{\end{list}}
\newcommand{\cA}{{\mathcal A}}
\newcommand{\cB}{{\mathcal B}}
\newcommand{\cF}{{\mathcal F}}
\newcommand{\cH}{{\mathcal H}}
\newcommand{\cM}{{\mathcal M}}
\newcommand{\cP}{{\mathcal P}}
\newcommand{\cR}{{\mathcal R}}
\begin{document}

\title[The algebra of thin measurable operators is directly finite]{The algebra of thin measurable operators is directly finite}

\author[A. Bikchentaev]{A. Bikchentaev}
\address[Airat M. Bikchentaev]{Kazan Federal University, 18 Kremlyovskaya str., Kazan, 420008 Russia \emph{E-mail~:} {\tt Airat.Bikchentaev@kpfu.ru}
}

\begin{abstract}
Let $\mathcal{M}$ be a semifinite von Neumann algebra on a Hilbert space $\cH$ equipped with a faithful normal semifinite trace $\tau$,  $S(\mathcal{M},\tau)$ be the ${}^*$-algebra of all $\tau$-measurable operators. Let $S_0(\mathcal{M},\tau)$ be the ${}^*$-algebra of all $\tau$-compact operators and
$T(\mathcal{M},\tau)=S_0(\mathcal{M},\tau)+\mathbb{C}I$ be the  ${}^*$-algebra of all operators $X=A+\lambda I$
with $A\in S_0(\mathcal{M},\tau)$ and $\lambda \in \mathbb{C}$. We prove that every operator of $T(\mathcal{M},\tau)$ that is left-invertible in  $T(\mathcal{M},\tau)$ is in fact invertible in  $T(\mathcal{M},\tau)$.
It is a generalization of   Sterling Berberian theorem (1982) on the subalgebra of thin  operators in $\cB (\cH)$.
For the singular value function $\mu(t; Q)$ of  $Q=Q^2\in S(\mathcal{M},\tau)$  we have  $\mu(t; Q)\in \{0\}\bigcup
[1, +\infty)$ for all $t>0$. It  gives the positive answer to the question posed by Daniyar Mushtari in  2010.
\end{abstract}

\subjclass[2010]{16E50, 46L51. \hfill Version~: \today.}

\keywords{Hilbert space: von Neumann algebra: semifinite trace; $\tau$-measurable operator; $\tau$-compact operator;  singular value function:  idempotent.}
\maketitle

\section{Introduction}

In this paper we extend the Sterling Berberian's result \cite{Ber82} (see also \cite{Hal81}) on direct finiteness of the algebra of thin operators on a infinite-dimensional Hilbert space to the Irving Segal's  non-commutative integration setting \cite{Seg53}.
Let $\mathcal{M}$ be a semifinite von Neumann algebra on a Hilbert space $\cH$ equipped with a faithful normal semifinite trace $\tau$,  $S(\mathcal{M},\tau)$ be the ${}^*$-algebra of all $\tau$-measurable operators. Let $S_0(\mathcal{M},\tau)$ be the ${}^*$-algebra of all $\tau$-compact operators and
$T(\mathcal{M},\tau)=S_0(\mathcal{M},\tau)+\mathbb{C}I$ be the  ${}^*$-algebra of all operators $X=A+\lambda I$
with $A\in S_0(\mathcal{M},\tau)$ and a complex num\-ber~$\lambda$. We prove that every operator of $T(\mathcal{M},\tau)$   left-invertible in  $T(\mathcal{M},\tau)$ is actually invertible in  $T(\mathcal{M},\tau)$ (Theorem \ref{t3.6}).
For the singular value function $\mu(t; Q)$ of  $Q=Q^2\in S(\mathcal{M},\tau)$  we have  $\mu(t; Q)\in \{0\}\bigcup
[1, +\infty)$ for all $t>0$ (Theorem \ref{t3.7}). It  is the positive answer to the question by Daniyar Mushtari of year 2010.

The author sincerely thank Vladimir Chilin for useful discussions of the results  presented in this paper. 

\section{Preliminaries}\label{prel}

Let ${\mathcal M}$ be a von Neumann algebra of operators on a Hilbert space
 $\mathcal H$, let $\cP(\mathcal{M} )$ be the lattice of projections in ${\mathcal M}$,  $I$  be the unit of
$\mathcal{M}$.
Also $\mathcal{M}^+$  denotes  the cone of positive elements in $\mathcal{M}$.
A mapping  $\varphi
:\mathcal{M}^+\to [0, +\infty]$ is called {\it a trace}, if
$\varphi (X+Y)=\varphi (X)+ \varphi (Y)$, $\varphi (\lambda
X)=\lambda \varphi (X)$ for all  $X, Y \in \mathcal{M}^+$, $\lambda
\ge 0$ (moreover, $0 \cdot(+\infty)\equiv 0$);
$\varphi (Z^*Z)=\varphi (ZZ^*)$ for all  $Z \in \mathcal{M}$.
A trace  $\varphi$ is called
 {\it faithful}, if $\varphi (X)>0$ for all  $X \in
\mathcal{M}^+$, $X \not=0$;
 {\it normal}, if $X_i \uparrow X $ $(X_i,X \in \mathcal{M}^+
)\Rightarrow \varphi (X)=\sup \varphi (X_i)$;
{\it semifinite}, if $ \varphi (X)=\sup \{
\varphi (Y): \; Y\in \mathcal{M}^+, \; Y \le X, \; \varphi
(Y)<+\infty\}$ for every $X \in \mathcal{M}^+$.

An operator   on $\mathcal{H}$ (not necessarily bounded or densely defined) 
is said to be  {\it affiliated to the von Neumann algebra} $\mathcal{M}$ if it commutes with any
unitary operator from the commutant   $\mathcal{M}'$ of the algebra $\mathcal{M}$. 
Let   $\tau$ be a faithful normal semifinite trace  on
$\mathcal{M}$. A closed operator $X$, affiliated to  
$\mathcal{M}$ and possesing a domain  $\mathfrak{D}(X)$
everywhere dense
 in $\mathcal{H}$ is said to be {\it $\tau$-measurable} if,
   for any $\varepsilon >0$, there exists a  $P\in
\cP(\mathcal{M})$ such that $ P\mathcal{H}\subset
\mathfrak{D}(X)$  and $ \tau (I-P)<\varepsilon $. The set 
$S(\mathcal{M}, \tau )$ of all $\tau$-measurable operators is a
${}^*$-algebra under  passage to the adjoint operator,
multiplication by a scalar, and operations of strong addition and 
multiplication resulting from the closure of the ordinary operations \cite{Seg53}, \cite{Nel74}.
Let $\mathcal{L}^+$ and $\mathcal{L}^{\text{\rm h}}$ denote the positive and Hermitian parts of  a family
$\mathcal{L}\subset S(\mathcal{M},\tau)$, respectively.
We denote  by  $\leq$ the partial order in
$S(\cM,\tau)^{\text{\rm h}}$ generated by its proper cone $S(\cM,\tau)^+$.
If $X\in S(\cM,\tau)$,  then $|X|= \sqrt{X^*X}\in S(\cM,\tau)^+$.
The generalized singular value function $\mu(X):t\rightarrow \mu(t;X)$ of
the operator $X$ is defined by setting
$$
\mu(s;X)
=
\inf\{\|XP\|: \; P \in \cP (\mathcal{M}) \mbox{  and } \tau(I-P)\leq s\}.
$$

\begin{lemma}\label{l2.1} {\rm (see \cite{FK86})}
{\it We have 
$\mu (s+t;XY)\leq \mu (s; X)\mu (t;Y)$  for all  $X, Y \in S(\mathcal{M}, \tau )$ and $s, t>0$. }
\end{lemma}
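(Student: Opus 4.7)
The plan is to approximate the infima defining $\mu(s;X)$ and $\mu(t;Y)$, then combine the two nearly optimal projections into a single projection $R$ that realises (almost) the product bound. Fix $\varepsilon>0$ and, by the definition of $\mu$, choose $P,Q\in\cP(\cM)$ with
\[
\tau(I-P)\le s,\ \tau(I-Q)\le t,\ \|XP\|\le\mu(s;X)+\varepsilon,\ \|YQ\|\le\mu(t;Y)+\varepsilon.
\]
Since $\|YQ\|$ is finite, $YQ$ extends to an element of $\cM$, and so does $B:=(I-P)YQ$. The target is to produce $R\le Q$ with $\tau(I-R)\le s+t$ such that the range of $YR$ lies in $P\cH$; for then $XYR=XP\cdot YR$ and the bounds on $\|XP\|$ and $\|YQ\|\ge\|YR\|$ multiply to the required estimate on $\|XYR\|$.

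To build $R$, introduce the right and left supports $r(B),\ell(B)\in\cP(\cM)$ of $B$, i.e.\ the projections onto $(\ker B)^{\perp}$ and onto $\overline{\operatorname{ran} B}$ respectively. From $B(I-Q)=0$ one gets $r(B)\le Q$, and from $\operatorname{ran} B\subset(I-P)\cH$ one gets $\ell(B)\le I-P$. Set $R:=Q-r(B)$. Then $BR=0$, so $(I-P)YR=0$ and hence $YR=PYR$. Combined with $R\le Q$ this gives
\[
\|XYR\|=\|(XP)(YR)\|\le \|XP\|\cdot\|YQ\|\le\bigl(\mu(s;X)+\varepsilon\bigr)\bigl(\mu(t;Y)+\varepsilon\bigr).
\]

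For the trace estimate, $I-R=(I-Q)+r(B)$. Via the polar decomposition $B=u|B|$ with partial isometry $u\in\cM$, the two supports $r(B)$ and $\ell(B)$ are Murray--von~Neumann equivalent, whence $\tau(r(B))=\tau(\ell(B))\le\tau(I-P)\le s$. Therefore $\tau(I-R)\le s+t$, and
\[
\mu(s+t;XY)\le\|XYR\|\le\bigl(\mu(s;X)+\varepsilon\bigr)\bigl(\mu(t;Y)+\varepsilon\bigr).
\]
Letting $\varepsilon\downarrow 0$ completes the proof. The one delicate point, and what I expect to be the main obstacle, is the use of trace invariance under the support switch $r(B)\leftrightarrow\ell(B)$: the bound $\tau(\ell(B))\le\tau(I-P)$ is automatic from $\operatorname{ran} B\subset(I-P)\cH$, but what actually enters $\tau(I-R)$ is $\tau(r(B))$, and only the symmetry of the semifinite trace under Murray--von~Neumann equivalence bridges the two.
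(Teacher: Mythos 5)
Your argument is correct, and it is essentially the standard proof of this submultiplicativity inequality from the cited reference [FK86]; the paper itself gives no proof, quoting the result from Fack--Kosaki. The one point worth flagging is purely notational: you write $r(B)$ for the support projection onto $(\ker B)^{\perp}$ and $\ell(B)$ for the range projection, whereas the paper uses ${\rm r}(\cdot)$ for the range projection and ${\rm supp}(\cdot)$ for the support -- the mathematics (including the key step $\tau(r(B))=\tau(\ell(B))$ via polar decomposition and the tracial property) is sound.
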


The sets
$U({\varepsilon, \delta})=\{X \in S(\mathcal{M}, \tau ):
 \; (\|XP\|\le \varepsilon \;
{\text{\rm and}} \;  \tau(I-P)\leq \delta  \; {\text{\rm for some}} \; P \in \cP (\mathcal{M})) \}$,
where $ \varepsilon >0,\, \delta >0$, form a base at $0$ for a metrizable vector topology $t_{\tau}$
on  $S(\mathcal{M}, \tau )$, called {\it the measure topology} \cite{Nel74}.
Equipped with this topology, 
$S(\mathcal{M}, \tau )$ is a  complete metrizable topological ${}^*$-algebra in which $\mathcal{M}$ is dense.
We will write $X_n \buildrel{\tau}\over
\longrightarrow X$ if a sequence $\{X_n{\}}_{n=1}^\infty$ converges to  $X \in S(\mathcal{M}, \tau )$
in the measure topology on  $S(\mathcal{M}, \tau )$. 

The set of  $\tau$-compact operators 
$S_0(\mathcal{M}, \tau )=
\{X \in  S(\mathcal{M}, \tau ) :  \lim \limits_{t\to \infty }\mu_t(X)=0\}$
is an ideal in  $S(\mathcal{M}, \tau )$. 
For any closed and densely defined linear operator $X:\mathfrak{D}\left( X\right) \rightarrow \cH $,
the \emph{null projection} ${\rm n}(X)={\rm n}(|X|)$ is the projection onto its kernel $\mbox{Ker} (X)$,
 the \emph{range projection } ${\rm r}(X)$ is the projection onto the closure of its range $\mbox{Ran}(X)$ and the \emph{support projection} ${\rm supp}(X)$ of $X$ is defined by ${\rm supp}(X) =I - n(X)$.

The two-sided ideal $\cF (\cM, \tau)$ in $\cM$ consisting of all elements of $\tau$-finite range is defined by
$$\cF(\cM, \tau)=\{X\in \cM ~:~ \tau({\rm r}(X)) <\infty\} = \{X \in \cM ~:~ \tau({\rm supp}(X)) <\infty\}.$$
Equivalently, $\cF(\cM, \tau)=\{X\in \cM : \mu(t;X)=0 \; \mbox{for some }t>0\}$.
Clearly, $S_0(\cM,\tau)$ is the closure of $\cF(\cM, \tau)$ with respect to the measure topology \cite{DP14}.

\section{The main results}

Throughout the sequel, let $\mathcal M$ be an arbitrary semifinite von Neumann algebra, with some
distinguished faithful normal semifinite trace $\tau$.

\begin{lemma}\label{l3.1} 
We have  $|X|\in T(\mathcal{M}, \tau)$ for every $X\in T(\mathcal{M}, \tau)$.
\end{lemma}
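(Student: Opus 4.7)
The plan is to write $X = A + \lambda I$ with $A \in S_0(\mathcal{M},\tau)$ and $\lambda \in \mathbb{C}$, and first reduce $X^*X$ to the form $|\lambda|^2 I + (\text{something in } S_0)$. Indeed,
\[
X^*X = A^*A + \bar{\lambda}A + \lambda A^* + |\lambda|^{2}I.
\]
Since $S_0(\mathcal{M},\tau)$ is a ${}^{*}$-ideal in $S(\mathcal{M},\tau)$, the sum $B:=A^*A + \bar{\lambda}A + \lambda A^*$ belongs to $S_0(\mathcal{M},\tau)^{\text{h}}$, so $|X|^2 = |\lambda|^{2} I + B$.

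If $\lambda = 0$, then $X = A \in S_0(\mathcal{M},\tau)$. Since $\mu(t;|A|) = \mu(t;A)\to 0$ as $t\to\infty$, we get $|X|=|A|\in S_0(\mathcal{M},\tau)\subset T(\mathcal{M},\tau)$.

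If $\lambda \neq 0$, set $c = |\lambda|>0$. The key trick is the factorisation
\[
(|X| - cI)(|X| + cI) = |X|^{2} - c^{2} I = B,
\]
which is legitimate because $|X|$ and $cI$ commute. Since $|X|\geq 0$, the operator $|X|+cI$ is bounded below by $cI$, hence invertible in $\mathcal{M}$ with $\|(|X|+cI)^{-1}\|\le 1/c$. Therefore
\[
|X| - cI \;=\; B\,(|X|+cI)^{-1},
\]
and since $B\in S_0(\mathcal{M},\tau)$ and $(|X|+cI)^{-1}\in\mathcal{M}\subset S(\mathcal{M},\tau)$, the ideal property gives $|X|-cI\in S_0(\mathcal{M},\tau)$. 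Consequently $|X| = cI + (|X|-cI) \in \mathbb{C}I + S_0(\mathcal{M},\tau) = T(\mathcal{M},\tau)$, as desired.

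There is no serious obstacle here; the only step that needs slight care is justifying that $(|X|+cI)^{-1}$ genuinely lives in $\mathcal{M}$ (so that multiplying $B$ by it keeps one inside $S_0(\mathcal{M},\tau)$), which follows from the lower bound $|X|+cI\geq cI$ and the functional calculus for self-adjoint elements of $S(\mathcal{M},\tau)$.
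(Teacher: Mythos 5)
Your proof is correct, and it takes a genuinely different route from the paper's. The paper proceeds by approximation: it notes that $F(\mathcal{M},\tau)=\cF(\mathcal{M},\tau)+\mathbb{C}I$ is a unital $C^*$-subalgebra of $\mathcal{M}$ (so absolute values stay inside it), approximates $A$ in the measure topology by a sequence in $\cF(\mathcal{M},\tau)$, and then invokes the $t_\tau$-continuity of the involution, of multiplication, and of the square-root function (citing Tikhonov) to pass to the limit and identify the scalar part of $|X|$ as $|\lambda|I$. You instead argue algebraically: writing $|X|^2=|\lambda|^2I+B$ with $B\in S_0(\mathcal{M},\tau)^{\mathrm{h}}$ and factoring $|X|-cI=B(|X|+cI)^{-1}$ with $c=|\lambda|$, where $(|X|+cI)^{-1}\in\mathcal{M}$ by the lower bound $|X|+cI\geq cI$ and Borel functional calculus, so the ideal property of $S_0(\mathcal{M},\tau)$ finishes the proof in one line. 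Your argument is more elementary and self-contained (no density of $\cF$ in $S_0$, no continuity of operator functions in the measure topology), and it identifies the scalar part $|\lambda|$ explicitly rather than extracting it as a limit; the paper's approach, on the other hand, generalizes more readily to other continuous functions of $X$ and other ideals closed in the measure topology. Both are complete proofs.
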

\begin{proof}
The ideal $\cF (\cM,\tau)$ is a $C^*$-subalgebra in $\cM$. Hence  $F(\mathcal{M}, \tau)=\cF (\cM,\tau)+\mathbb{C}I$ is a unital  $C^*$-subalgebra in $\cM$ and if $X\in F(\mathcal{M}, \tau)$, then $|X|\in F(\mathcal{M}, \tau)$.
Assume that $X\in T(\mathcal{M}, \tau)$, i.e., $X=A+\lambda I$ with $A\in S_0(\mathcal{M}, \tau)$ and
$\lambda \in \mathbb{C}$. Since $\cF (\cM,\tau) $ is $t_{\tau}$-dense in  $S_0(\mathcal{M}, \tau)$, there exists a sequence  $\{A_n\}_{n=1}^{\infty} \subset \cF (\cM,\tau) $ such that $A_n \buildrel{\tau}\over \longrightarrow A$
as $n \to \infty$. Then the sequence  $X_n=A_n+\lambda I$, $n\in \mathbb{N}$, lies in  $F(\mathcal{M}, \tau)$
and  $t_{\tau}$-converges to the operator $X$ as  $n \to \infty$. According to the results given above, $|X_n|=B_n+| \lambda | I$
with  some  $B_n\in F(\mathcal{M}, \tau)^{\text{\rm h}}$, $n\in \mathbb{N}$.
Since  $X_n \buildrel{\tau}\over \longrightarrow X$
as $n \to \infty$, we have $X_n^* \buildrel{\tau}\over \longrightarrow X^*$ as $n \to \infty$ by  $t_{\tau}$-continuity
of the involution in $S(\mathcal{M}, \tau)$. Then via joint  $t_{\tau}$-continuity
of the multiplication in $S(\mathcal{M}, \tau)$ we have $X_n^* X_n\buildrel{\tau}\over \longrightarrow X^*X$ as $n \to \infty$. Therefore we obtain  $|X_n|  \buildrel{\tau}\over \longrightarrow |X|$
as $n \to \infty$ by  $t_{\tau}$-continuity of the real function $f(t)=\sqrt{t}$, $t\geq 0$ \cite{Tik87}. Thus
the sequence  $\{B_n\}_{n=1}^{\infty}$  $t_{\tau}$-converges to a some operator $B\in S_0(\mathcal{M}, \tau)^{\text{\rm h}}$ and $|X|=B+| \lambda | I$.
\end{proof}

\begin{lemma}\label{l3.2} 
{\rm (see \cite[Corollary 2.4]{Bik14})}
If   $X\in T(\mathcal{M}, \tau)$ and $XX^*\leq X^*X$, then  $XX^*=X^*X$.
\end{lemma}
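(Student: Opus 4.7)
The plan is first to strip off the scalar part of $X$. Writing $X=A+\lambda I$ with $A\in S_0(\mathcal{M},\tau)$ and expanding, the cross terms in $\lambda$ and $\bar\lambda$ cancel, giving
\[
X^*X-XX^* \;=\; A^*A-AA^*.
\]
Hence the hypothesis $XX^*\leq X^*X$ is equivalent to $AA^*\leq A^*A$, and the desired conclusion to $A^*A=AA^*$. Thus it suffices to prove the statement for $X=A\in S_0(\mathcal{M},\tau)$, i.e.\ the scalar $\lambda$ plays no role whatsoever.

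The decisive input is the identity $\mu(t;A)=\mu(t;A^*)$ for every $t>0$, from which
\[
\mu(t;A^*A)\;=\;\mu(t;A)^{2}\;=\;\mu(t;A^*)^{2}\;=\;\mu(t;AA^*).
\]
Therefore $B:=A^*A$ and $C:=AA^*$ are two positive elements of $S_0(\mathcal{M},\tau)$ with identical generalized singular value functions, hence identical distribution functions $d_B(\lambda)=d_C(\lambda)$ for every $\lambda>0$; moreover these values are all finite, since $B,C\in S_0(\mathcal{M},\tau)$.

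The main step is then to prove the following abstract fact: if $B,C\in S_0(\mathcal{M},\tau)^{+}$ satisfy $B\geq C$ and $\mu(t;B)=\mu(t;C)$ for every $t>0$, then $B=C$. I would argue by contradiction. Suppose $D:=B-C\geq 0$ is nonzero; then for some $\varepsilon>0$ the spectral projection $p:=E_D([\varepsilon,\infty))$ is nonzero, and $\tau(p)<\infty$ because $D\in S_0(\mathcal{M},\tau)$. Intersecting $p$ with $E_B([0,M])$ for a sufficiently large $M$ yields a nonzero $\tau$-finite projection $q$ on whose range both $B$ and $C$ are bounded and $qDq\geq \tfrac{\varepsilon}{2}q$, so $\tau(qBq)\geq \tau(qCq)+\tfrac{\varepsilon}{2}\tau(q)$. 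On the other hand, by arranging $q$ to lie beneath an appropriate spectral projection of $B$ of the form $E_B((\eta,\infty))$, one combines the Ky~Fan type inequality $\tau(qXq)\leq \int_{0}^{\tau(q)}\mu(t;X)\,dt$ for $X\geq 0$ with the equality $\mu(t;B)=\mu(t;C)$ to extract $\tau(qBq)=\tau(qCq)$, producing the required contradiction.

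The principal obstacle is precisely that $A^*A$ and $AA^*$ need not be of finite trace, so one cannot simply write $\tau(A^*A)=\tau(AA^*)$ and conclude from $A^*A-AA^*\geq 0$ that the difference vanishes. The delicacy of the proof lies in passing to jointly $\tau$-finite spectral corners, where traces become finite, and then exploiting the Fack--Kosaki inequalities of Lemma~\ref{l2.1} and their integral consequences to convert equality of singular value functions into equality of operators.
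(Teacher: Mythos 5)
The paper itself offers no proof here --- it simply cites \cite[Corollary 2.4]{Bik14} --- so your argument has to stand on its own. Your two reductions are correct and cleanly done: the scalar part cancels, $X^*X-XX^*=A^*A-AA^*$ with $A\in S_0(\mathcal{M},\tau)$, and the identities $\mu(t;A)=\mu(t;A^*)$ and $\mu(t;A^*A)=\mu(t;A)^2$ reduce the lemma to the (true) abstract statement: if $B\geq C\geq 0$ are $\tau$-compact with $\mu(t;B)=\mu(t;C)$ for all $t>0$, then $B=C$. The problem is that your proof of this abstract statement does not close.

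A minor repairable point first: $p\wedge E_B([0,M])$ can be zero for every $M$ in general; to see it is eventually nonzero you must use $\tau(E_B((M,\infty)))=d_B(M)\to 0<\tau(p)$ together with the parallelogram law $p-p\wedge e\sim p\vee e-e\leq I-e$. The genuine gap is the last sentence. The Ky Fan inequality gives only \emph{upper} bounds $\tau(qBq)\leq\int_0^{\tau(q)}\mu(t;B)\,dt$ and $\tau(qCq)\leq\int_0^{\tau(q)}\mu(t;C)\,dt$; the fact that these two upper bounds coincide does not ``extract'' $\tau(qBq)=\tau(qCq)$. To get a contradiction you need an \emph{exact} evaluation (or a matching lower bound) of one of the two traces, and putting $q$ under $E_B((\eta,\infty))$ only yields $\tau(qBq)\geq\eta\,\tau(q)$, which points the wrong way; moreover a nonzero $q$ satisfying all three of your constraints need not exist. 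The standard way to finish is to test against spectral projections of the \emph{smaller} operator: for $f_\eta=E_C((\eta,\infty))$ one has $\tau(f_\eta Cf_\eta)=\int_0^{\tau(f_\eta)}\mu(t;C)\,dt$ exactly, while $\tau(f_\eta Bf_\eta)\leq\int_0^{\tau(f_\eta)}\mu(t;B)\,dt$, whence $\tau(f_\eta Df_\eta)\leq 0$, so $Df_\eta=0$; letting $\eta\downarrow 0$ gives $D\,{\rm supp}(C)=0$, after which $d_B=d_C+d_D$ and $d_B=d_C<\infty$ force $d_D\equiv 0$, i.e.\ $D=0$. Even this needs one further idea you omit: $\int_0^s\mu(t;C)\,dt$ may be $+\infty$ for every $s>0$, so one should first replace $B,C$ by $g(B)\geq g(C)$ for a bounded operator monotone $g$ (e.g.\ $g(x)=x/(1+x)$), which preserves the ordering, the equality of singular value functions, and membership in $S_0(\mathcal{M},\tau)$.
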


\begin{lemma}\label{l3.3} 
The idempotents of    $T(\mathcal{M}, \tau)$ are the operators  $P$,  $I-P$, where $P$ runs over the  idempotent
operators of $S_0(\mathcal{M}, \tau)$.
\end{lemma}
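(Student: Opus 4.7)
The plan is to expand $E^{2}=E$ for an idempotent $E\in T(\mathcal{M},\tau)$ and use the ideal property of $S_0(\mathcal{M},\tau)$ together with the fact that $I\notin S_0(\mathcal{M},\tau)$ (in the infinite-trace case) to pin down the scalar part.

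Write $E=A+\lambda I$ with $A\in S_0(\mathcal{M},\tau)$ and $\lambda\in\mathbb{C}$. Expanding $E^{2}=E$ and collecting terms gives
$$
A^{2}+(2\lambda-1)A=(\lambda-\lambda^{2})I.
$$
The left-hand side lies in $S_0(\mathcal{M},\tau)$ since this set is a two-sided ideal in $S(\mathcal{M},\tau)$, so $(\lambda-\lambda^{2})I\in S_0(\mathcal{M},\tau)$. From the definition of the singular value function, $\mu(t;I)=1$ for $0\le t<\tau(I)$ and $\mu(t;I)=0$ for $t\ge\tau(I)$; hence $I\in S_0(\mathcal{M},\tau)$ if and only if $\tau(I)<\infty$.

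If $\tau(I)=\infty$, we must have $\lambda^{2}-\lambda=0$, i.e.\ $\lambda\in\{0,1\}$. For $\lambda=0$, $E=A$ is an idempotent of $S_0(\mathcal{M},\tau)$. For $\lambda=1$, the displayed equation collapses to $A^{2}=-A$, so $P:=-A\in S_0(\mathcal{M},\tau)$ satisfies $P^{2}=P$ and $E=I-P$. If instead $\tau(I)<\infty$, then $T(\mathcal{M},\tau)=S_0(\mathcal{M},\tau)$, so every idempotent of $T(\mathcal{M},\tau)$ already has the form $E=P$ with $P\in S_0(\mathcal{M},\tau)$ satisfying $P^{2}=P$. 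Conversely, for any idempotent $P\in S_0(\mathcal{M},\tau)$ both $P$ and $I-P=(-P)+I$ belong to $T(\mathcal{M},\tau)$, and the identity $(I-P)^{2}=I-2P+P^{2}=I-P$ shows $I-P$ is idempotent as well.

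There is no serious obstacle here; the only nontrivial ingredient is the elementary observation that the identity fails to be $\tau$-compact whenever $\tau(I)=\infty$, so the scalar part of any $T(\mathcal{M},\tau)$-idempotent is forced to be $0$ or $1$.
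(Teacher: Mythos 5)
Your proof is correct and follows essentially the same route as the paper: expand $E^{2}=E$, use that $S_0(\mathcal{M},\tau)$ is an ideal to force $(\lambda-\lambda^{2})I\in S_0(\mathcal{M},\tau)$ and hence $\lambda\in\{0,1\}$, then read off $E=P$ or $E=I-P$. You are in fact slightly more careful than the paper, which silently assumes $I\notin S_0(\mathcal{M},\tau)$ and does not separate out the finite-trace case where $T(\mathcal{M},\tau)=S_0(\mathcal{M},\tau)$; your explicit treatment of that case closes a small gap rather than opening one.
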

\begin{proof} 
Assume that $X=A+\lambda I \in T(\mathcal{M}, \tau)$ and
$X^2=X$. Then  $A^2+2\lambda A+\lambda^2 I=A+\lambda I$, i.e., $\lambda \in \{0, 1\}$. If  $\lambda =0$, then
$A^2=A$ and $A\in S_0(\mathcal{M}, \tau)$ is an  idempotent operator.
Then $I -A\in T(\mathcal{M}, \tau)$ and is  also an idempotent. If  $\lambda =1$, then
$A^2=-A=(-A)^2$ and $-A\in S_0(\mathcal{M}, \tau)$ is an  idempotent operator.
Then $I -(-A)\in T(\mathcal{M}, \tau)$ and is also an idempotent.
\end{proof}

Consider $F_0(\mathcal{M}, \tau)=\{A\in S_0(\mathcal{M}, \tau):\; \tau ({\rm r}(A))<+\infty\}$ and 
$\cA (\mathcal{M}, \tau)=F_0(\mathcal{M}, \tau)+\mathbb{C}I$. Then $\cA (\mathcal{M}, \tau)$ is a
${}^*$-subalgebra of  $T(\mathcal{M}, \tau)$.

\begin{lemma}\label{l3.4} 
$\cA (\mathcal{M}, \tau)$ contains every idempotent of    $T(\mathcal{M}, \tau)$.
\end{lemma}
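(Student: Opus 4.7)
The plan is to reduce, via Lemma \ref{l3.3}, to proving that every idempotent $P = P^2 \in S_0(\cM, \tau)$ lies in $F_0(\cM, \tau)$, i.e.\ that $\tau({\rm r}(P)) < +\infty$. Since $\cA(\cM, \tau) = F_0(\cM, \tau) + \bC I$ is closed under addition of scalar multiples of $I$, this reduction handles both types of idempotents furnished by Lemma \ref{l3.3}: those of the form $P$ and those of the form $I - P$.

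The key structural observation is that a closed idempotent $P$ affiliated with $\cM$ fixes its range pointwise. If $x = Py$ with $y \in \mathfrak{D}(P)$, then $P^2 = P$ gives $x = Py \in \mathfrak{D}(P)$ and $Px = P^2 y = Py = x$. Moreover, $\mbox{Ran}(P)$ is closed: if $x_n = Py_n \to x$, then $x_n \in \mathfrak{D}(P)$ with $Px_n = x_n \to x$, and closedness of $P$ forces $x \in \mathfrak{D}(P)$ with $Px = x$, so $x \in \mbox{Ran}(P)$. Consequently ${\rm r}(P)\cH = \mbox{Ran}(P) \subseteq \mathfrak{D}(P)$ and $P$ restricts to the identity on ${\rm r}(P)\cH$.

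Next I translate this into the operator identity $P\cdot {\rm r}(P) = {\rm r}(P)$ in $S(\cM, \tau)$. Because ${\rm r}(P)\cH \subseteq \mathfrak{D}(P)$, the strong product $P\cdot {\rm r}(P)$ is defined on all of $\cH$ and sends $x \mapsto P({\rm r}(P)x) = {\rm r}(P)x$. Since $S_0(\cM, \tau)$ is an ideal in $S(\cM, \tau)$ and $P \in S_0(\cM, \tau)$, it follows that ${\rm r}(P) = P\cdot {\rm r}(P) \in S_0(\cM, \tau)$. But ${\rm r}(P) \in \cP(\cM)$, and for any projection $Q \in \cP(\cM)$ one has $\mu(t; Q) = 1$ for $t < \tau(Q)$ and $\mu(t; Q) = 0$ for $t \geq \tau(Q)$; hence membership in $S_0(\cM, \tau)$ forces $\tau(Q) < +\infty$. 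Therefore $\tau({\rm r}(P)) < +\infty$, so $P \in F_0(\cM, \tau) \subseteq \cA(\cM, \tau)$, and consequently $I - P = -P + I \in \cA(\cM, \tau)$ as well.

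The only genuinely delicate step I anticipate is verifying that $P\cdot {\rm r}(P)$ equals ${\rm r}(P)$ inside $S(\cM, \tau)$ rather than merely on a dense subspace; once this is in hand, the ideal property of $S_0(\cM, \tau)$ together with the elementary fact that a projection is $\tau$-compact iff it has finite trace closes the argument at once.
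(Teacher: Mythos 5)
Your proposal is correct in substance and reaches the same two facts the paper's proof turns on, but by a different route. The paper invokes \cite[Theorems 2.21 and 2.23]{Bik16} to produce the ``range'' projection $Q^{\sharp}=Q(Q+Q^*-I)^{-1}\in\cP(\cM)$ and the decomposition $Q=P+Z$, extracting the identities $QP=P$ and $PQ=Q$; it then runs the same ideal-plus-finite-trace argument you do ($QP=P$ puts $P$ in $S_0$, hence in $\cF(\cM,\tau)$, and $PQ=Q$ gives $\tau(\mathrm{r}(Q))\le\tau(P)<\infty$). You instead prove the single identity $P\cdot\mathrm{r}(P)=\mathrm{r}(P)$ by hand from closedness of $P$, which is more elementary and self-contained, and it lands directly on $\tau(\mathrm{r}(P))<\infty$, which is exactly the defining condition for membership in $F_0(\cM,\tau)$. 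One technical wrinkle you should smooth out: the equality $P^2=P$ in $S(\cM,\tau)$ is an equality of \emph{strong} products (closures of the ordinary composition), so it does not immediately give $Py\in\mathfrak{D}(P)$ and $P(Py)=Py$ for \emph{every} $y\in\mathfrak{D}(P)$, as your first step asserts. The fix is to work first on the domain $\mathfrak{D}_0$ of the ordinary composition $P\circ P$: since the closure of $P\circ P$ is $P$, one has $P\circ P\subseteq P$, so $P(Py)=Py$ for $y\in\mathfrak{D}_0$, and $\mathfrak{D}_0$ is a core for $P$, so $P\mathfrak{D}_0$ is dense in $\overline{\mathrm{Ran}(P)}$; your closedness argument then shows every $x\in\mathrm{r}(P)\cH$ lies in $\mathfrak{D}(P)$ with $Px=x$. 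With that reordering the rest of your argument (the ideal property of $S_0(\cM,\tau)$, $\mu(t;Q)=\chi_{(0,\tau(Q)]}(t)$ for projections, and the reduction via Lemma \ref{l3.3}) goes through verbatim, and you avoid the external citations the paper relies on.
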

\begin{proof} 
Let $Q$ be an   idempotent operator of    $S(\mathcal{M}, \tau)$. Then
$$
(Q+Q^*-I)^2=I+(Q-Q^*)(Q-Q^*)^*
$$
and by \cite[Theorem 2.21]{Bik16} there exists a unique ``range'' projection $Q^{\sharp}\in \cP (\cM )$,  defined by the formula $Q^{\sharp}=Q(Q+Q^*-I)^{-1}$ with $(Q+Q^*-I)^{-1}\in \cM$ and subject to the condition $Q^{\sharp} \cdot S(\cM , \tau )=Q \cdot S(\cM , \tau )$. By \cite[Theorem 2.23]{Bik16}  there exists a unique decomposition $Q=P+Z$, where $P=Q^{\sharp}\in \cP (\cM )$
and $Z\in S(\mathcal{M}, \tau)$ is a nilpotent  so that $Z^2=0$ and $ZP=0$, $PZ=Z$.
Thus $QP=P$ and $PQ=Q$. Assume that $Q\in S_0(\mathcal{M}, \tau)$. Since $QP=P$, we have $P\in S_0(\mathcal{M}, \tau)$. Since the singular function $\mu (t; P)=\chi_{(0, \tau (P)]} (t)$ for all $t>0$, we conclude that
$P\in \cF (\mathcal{M}, \tau)$. Then by equality $PQ=Q$, we have $Q\in F_0(\mathcal{M}, \tau)$ and
apply  Lemma \ref{l3.3}. 
\end{proof}

\begin{lemma}\label{l3.5} 
 $F_0(\mathcal{M}, \tau)$ is a regular ring.
\end{lemma}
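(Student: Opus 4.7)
The plan is to establish (von Neumann) regularity of $F_0(\cM,\tau)$ by exhibiting, for each $A\in F_0(\cM,\tau)$, an element $B\in F_0(\cM,\tau)$ with $ABA=A$. The natural candidate is a Moore--Penrose style pseudo-inverse built from the polar decomposition, making essential use of the finite-support hypothesis.

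First, I would write $A=V|A|$ using the polar decomposition in $S(\cM,\tau)$, with $V\in\cM$ the partial isometry satisfying $V^*V=\mathrm{supp}(A)$ and $VV^*=\mathrm{r}(A)$. Since $\mathrm{supp}(A)$ and $\mathrm{r}(A)$ are Murray--von Neumann equivalent, both have finite $\tau$-trace. From the spectral resolution $|A|=\int_0^\infty\lambda\,dE(\lambda)$ I would define the generalized inverse by functional calculus
$$|A|^\dagger=\int_{(0,\infty)}\lambda^{-1}\,dE(\lambda),$$
the key step being to verify that $|A|^\dagger\in S(\cM,\tau)$. Indeed, the spectral projection of $|A|^\dagger$ corresponding to the complement of $[0,M]$ equals $E_{|A|}((0,1/M))$, which is dominated by $\mathrm{supp}(|A|)$; by normality of $\tau$ together with $\tau(\mathrm{supp}(|A|))<\infty$, this trace tends to $0$ as $M\to\infty$, giving $\tau$-measurability of $|A|^\dagger$.

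Setting $B:=|A|^\dagger V^*$, its range projection is majorized by $\mathrm{r}(|A|^\dagger)=\mathrm{supp}(|A|)$, so $\tau(\mathrm{r}(B))<\infty$ and $B\in F_0(\cM,\tau)$. Using $|A|\cdot|A|^\dagger=V^*V$ (from $\lambda\cdot\lambda^{-1}=1$ on the spectrum of $|A|$ away from zero) together with the partial-isometry identity $V\cdot V^*V=V$, a direct computation gives
$$ABA=V|A|\cdot|A|^\dagger V^*\cdot V|A|=V(V^*V)(V^*V)|A|=V|A|=A,$$
which is the required regular-ring identity.

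The main obstacle I anticipate is exactly the verification that $|A|^\dagger\in S(\cM,\tau)$: this operator is typically unbounded, since the spectrum of $|A|$ may accumulate at $0$. The finite-trace condition on $\mathrm{supp}(A)$ is precisely what forces the small-spectrum spectral projections of $|A|$ to have trace tending to zero, which is indispensable for $\tau$-measurability of $|A|^\dagger$ and therefore for the regular inverse to live inside $F_0(\cM,\tau)$. An alternative to the functional-calculus step would be to observe that $P:=\mathrm{supp}(A)\vee\mathrm{r}(A)$ has $\tau(P)<\infty$ and $A=PAP$, so $A$ lies in $S(P\cM P,\tau|_{P\cM P})$, a classical regular ring associated to a finite von Neumann algebra; reducing to this finite setting bypasses the explicit construction but relies on the same underlying principle.
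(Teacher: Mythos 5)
Your proof is correct, but it takes a genuinely different route from the paper. The paper's argument is a reduction: it sets $P=\mathrm{r}(A)\vee\mathrm{supp}(A)\in\cF(\cM,\tau)$, passes to the reduced algebra $\cM_P=P\cM P$ with the finite trace $\tau_P$, and invokes Sait\^o's theorem that $S(\cM_P,\tau_P)$ is a regular ring for a finite von Neumann algebra; the solution of $AXA=A$ found there automatically lies in $F_0(\cM,\tau)$ because its range projection is dominated by $P$. This is exactly the ``alternative'' you sketch in your final sentence, so you have correctly identified the underlying principle. Your main argument instead constructs the quasi-inverse explicitly as a Moore--Penrose-type element $B=|A|^\dagger V^*$, and the two delicate points --- that $|A|^\dagger$ is $\tau$-measurable because $E_{|A|}\bigl((0,1/M)\bigr)\downarrow 0$ inside the finite-trace projection $\mathrm{supp}(A)$, and that $\mathrm{r}(B)\leq\mathrm{supp}(|A|)$ forces $B\in F_0(\cM,\tau)$ --- are both handled correctly. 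What each approach buys: the paper's proof is shorter and delegates the analysis to a known structural theorem, while yours is self-contained, avoids the citation to Sait\^o, and produces an explicit reflexive generalized inverse (one even has $BAB=B$), at the cost of having to verify measurability of the unbounded functional-calculus element by hand.
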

\begin{proof} 
We show that for every operator  $A\in F_0(\mathcal{M}, \tau)$ the equation $AXA=A$ possesses a solution in  $F_0(\mathcal{M}, \tau)$. For  $A\in F_0(\mathcal{M}, \tau)$ the range projection ${\rm r}(A)$ and the support
 projection ${\rm supp} (A)$ lie in 
$\cF(\mathcal{M}, \tau)$. Consider the projection $P={\rm r}(A)\bigvee {\rm supp}(A)$ in $\cF(\mathcal{M}, \tau)$ and
the reduced von Neumann algebra $\cM_P=P\cM P$, the reduced  faithful normal finite trace $\tau_P$ with
$\tau_P(X)=\tau (PXP)$, $X\in \cM^+_P$. The algebra $\cM_P$ is finite, therefore
$S(\mathcal{M}_P, \tau_P)$ is a regular ring by \cite[Theorem 4.3]{Sai71}. Since $A\in S(\mathcal{M}_P, \tau_P)$,
 the equation $AXA=A$ admits a solution in  $S(\mathcal{M}_P, \tau_P)\subset F_0(\mathcal{M}, \tau)$.
\end{proof}

Idempotents $P, Q$ of a ring $\cR$ are said to be {\it equivalent} (in $\cR$), written $P\sim Q$, if there exist elements
$X, Y \in \cR$ such that $XY=P$ and $YX=Q$ (replacing $X, Y$ by $PXQ$, $QYP$, one can suppose that $X\in P\cR Q$, $Y\in Q\cR P$ \cite[p. 22]{Kap68}. Projections (=self-adjoint idempotents) $P, Q$ of a ring with involutions 
 are said to be {\it ${}^*$-equivalent}  if there exists an  element $X$ such that $XX^*=P$ and $X^*X=Q$.

\begin{theorem}\label{t3.6}
If   $X, Y \in T(\mathcal{M}, \tau)$ such that $XY=I$, then $YX=I$.
\end{theorem}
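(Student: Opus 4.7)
My plan is to produce from $XY=I$ an isometry in $T(\mathcal{M},\tau)$ to which Lemma~\ref{l3.2} applies, and use the resulting unitarity to show that $X$ itself is injective; then $YX=I$ will follow by a left-cancellation argument. The isometry I would work with is $V:=|X|Y$, which lies in $T(\mathcal{M},\tau)$ by Lemma~\ref{l3.1} and the closure of $T(\mathcal{M},\tau)$ under multiplication. From $XY=I$ one computes
\[
V^*V=Y^*|X|^{2}Y=Y^*X^*XY=(XY)^*(XY)=I,
\]
so $V$ is a bounded isometry with $\|V\|=1$, and the positive element $VV^*$ of $T(\mathcal{M},\tau)$ satisfies $VV^*\le I=V^*V$. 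Lemma~\ref{l3.2} then gives $VV^*=V^*V=I$, so $V$ is unitary in $T(\mathcal{M},\tau)$.

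Next I would translate this into information about $X$ via its polar decomposition $X=U|X|$ in $S(\mathcal{M},\tau)$, where $U\in\mathcal{M}$ is the partial isometry with $U^*U={\rm supp}(X)$ and $UU^*={\rm r}(X)$. Since $U^*X=U^*U|X|={\rm supp}(X)\cdot|X|=|X|$, multiplying $XY=I$ on the left by $U^*$ yields $|X|Y=U^*$, so $V=U^*$. Hence $U$ is unitary in $\mathcal{M}$, and in particular ${\rm supp}(X)=U^*U=I$; equivalently ${\rm n}(X)=0$, i.e.\ $X$ is injective.

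To finish, I would note that $YX$ is an idempotent of $T(\mathcal{M},\tau)$, since $(YX)^2=Y(XY)X=YX$, so $P:=I-YX\in T(\mathcal{M},\tau)$ is itself idempotent and satisfies $XP=X-XYX=0$. The injectivity of $X$ then forces $P=0$: on the dense domain of the strong product $XP$ we have $X(P\xi)=0$ with $P\xi\in{\rm dom}(X)$, so ${\rm n}(X)=0$ gives $P\xi=0$, and the closedness of $P$ in $S(\mathcal{M},\tau)$ extends this to $P=0$ on all of $\mathcal{H}$. Therefore $YX=I$, as required.

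The step I expect to be the main obstacle is the use of Lemma~\ref{l3.2} to promote the isometry $V$ to a unitary in $T(\mathcal{M},\tau)$; once this is done, the rest of the argument is the polar decomposition of $X$ together with a routine left-cancellation, so that the detailed classification of idempotents in $T(\mathcal{M},\tau)$ provided by Lemmas~\ref{l3.3}--\ref{l3.4} never has to be invoked.
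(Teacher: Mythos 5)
Your proof is correct, and it takes a genuinely different route from the paper's. The paper argues ring-theoretically: it shows $F_0(\mathcal{M},\tau)$ is regular (Lemma \ref{l3.5}), hence $\cA(\mathcal{M},\tau)$ is ${}^*$-regular; locates the idempotent $P=YX$ inside $\cA(\mathcal{M},\tau)$ (Lemmas \ref{l3.3}--\ref{l3.4}); and uses Kaplansky's equivalence theory together with the square-root axiom (fed by Lemma \ref{l3.1}) to upgrade the algebraic equivalence $P\sim I$ to a ${}^*$-equivalence, at which point Lemma \ref{l3.2} forces $P=I$. You instead construct the witnessing isometry explicitly: $V=|X|Y$ lies in $T(\mathcal{M},\tau)$, satisfies $V^*V=(XY)^*(XY)=I$, hence is a bounded isometry with $VV^*$ a projection, so $VV^*\le I=V^*V$ and Lemma \ref{l3.2} makes $V$ unitary; identifying $V$ with $U^*$ from the polar decomposition $X=U|X|$ gives ${\rm supp}(X)=VV^*=I$, and left-cancellation of the injective $X$ in $X(I-YX)=0$ finishes (this last step is the standard fact that $XP=0$ forces ${\rm r}(P)\le {\rm n}(X)$ for the strong product; alternatively, $VV^*=|X|YY^*|X|=I$ shows $|X|$ is invertible in $S(\mathcal{M},\tau)$, hence so is $X=U|X|$, and then $Y=X^{-1}$ at once). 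Both arguments rest on the same decisive input, Lemma \ref{l3.2}, and both need Lemma \ref{l3.1} (the paper for the (SR) axiom, you to place $V$ in $T(\mathcal{M},\tau)$); what your version buys is the complete elimination of Lemmas \ref{l3.3}--\ref{l3.5} and the regular-ring/Kaplansky machinery, at the modest cost of working with polar decompositions of unbounded operators in $S(\mathcal{M},\tau)$ rather than pure ring theory.
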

\begin{proof}
In the terms of ring theory, we assert that the ring $T(\mathcal{M}, \tau)$ is ``directly finite''
\cite[p. 49]{Goo79}. Since $F_0(\mathcal{M}, \tau)$ (by Lemma \ref{l3.5}) and $\cA(\mathcal{M}, \tau)/F_0(\mathcal{M}, \tau)\cong \mathbb{C}$ are both  regular rings,  $\cA(\mathcal{M}, \tau)$ is a regular ring \cite[p. 2, Lemma 1.3]{Goo79}; since, moreover, the involution of  $\cA(\mathcal{M}, \tau)$ is proper ($AA^*=0$ implies $A=0$), the algebra $\cA(\mathcal{M}, \tau)$ is
${}^*$-regular in the sense of von Neumann \cite[p. 229]{Ber72}.

If $X, Y$ are elements of $T(\mathcal{M}, \tau)$ such that $XY=I$, then $P=YX$ is an idempotent of  $T(\mathcal{M}, \tau)$ such that $P\sim I$ in  $T(\mathcal{M}, \tau)$. By Lemma \ref{l3.4} we have $P\in  \cA(\mathcal{M}, \tau)$;
since $ \cA(\mathcal{M}, \tau)$ is ${}^*$-regular, there exists a projection $Q\in  \cA(\mathcal{M}, \tau)$
such that $Q\cdot  \cA(\mathcal{M}, \tau)=P\cdot  \cA(\mathcal{M}, \tau)$ \cite[p. 229, Proposition 3]{Ber72}.
Then $P\sim Q$ in  $\cA(\mathcal{M}, \tau)$  \cite[p. 21, Theorem 14]{Kap68}, a fortiori $P\sim Q$ in  $T(\mathcal{M}, \tau)$; already $P\sim I$ in  $T(\mathcal{M}, \tau)$, so $Q\sim I$ in  $T(\mathcal{M}, \tau)$ by transitivity.
Since  $T(\mathcal{M}, \tau)$ satisfies the ``square root'' axiom (SR) and contains square roots of its positive elements (see
Lemma \ref{l3.1} and \cite[p. 90]{Kap68}), it follows that the projections $P, I$ are ${}^*$-equivalent in 
$T(\mathcal{M}, \tau)$  \cite[p. 35, Theorem 27]{Kap68}, say $X\in T(\mathcal{M}, \tau)$ with $XX^*=P$, $X^*X=I$.
By Lemma \ref{l3.2}, $P=I$; then $Q\cdot  \cA(\mathcal{M}, \tau)=P\cdot  \cA(\mathcal{M}, \tau)= \cA(\mathcal{M}, \tau)$ shows that $P=I$, that is, $YX=I$.
\end{proof}

Theorem \ref{t3.6} can obviously be reformulated as follows: if  $A, B \in S_0(\mathcal{M}, \tau)$ and
$A+B+AB=0$, then $AB=BA$. On invertibility in $S(\mathcal{M}, \tau)$ see \cite{Tem10}, \cite{Bik21} and
 \cite{Bik22}.

\begin{theorem}\label{t3.7}
If   $Q\in S(\mathcal{M}, \tau)$ is such that $Q^2=Q$, then $\mu (t; Q)\in \{0\}\bigcup [1, +\infty)$ for all $t>0$.
For the symmetry $U=2Q-I$ we have $\mu (t; U) \geq 1$  for all $t>0$.
\end{theorem}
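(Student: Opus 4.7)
The plan is to reduce everything to the structural decomposition of an idempotent $Q\in S(\mathcal{M},\tau)$ already used in the proof of Lemma \ref{l3.4}: by \cite[Theorems 2.21 and 2.23]{Bik16} we may write $Q=P+Z$ with $P=Q^{\sharp}\in\cP(\mathcal{M})$, $Z^{2}=0$, $ZP=0$, and $PZ=Z$. Computing $QQ^{*}=(P+Z)(P+Z^{*})$, the cross terms $ZP$ and $PZ^{*}=(ZP)^{*}$ both vanish, leaving
\[
QQ^{*}=P+ZZ^{*}\geq P.
\]
Combining monotonicity of $\mu(t;\cdot)$ on the positive cone with the identity $\mu(t;Q)^{2}=\mu(t;QQ^{*})$ (coming from $\mu(t;X)=\mu(t;|X|)$ and functional calculus on $|Q^{*}|$), this operator inequality passes to singular values as $\mu(t;Q)^{2}\geq \mu(t;P)$. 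Since $P$ is a projection, $\mu(t;P)=1$ for $0<t<\tau(P)$ (with $\tau(P)=+\infty$ allowed when $P\notin\cF(\mathcal{M},\tau)$), so $\mu(t;Q)\geq 1$ throughout that range.

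To cover the remaining range $t>\tau(P)$ in the case $\tau(P)<\infty$, I would use the dual identity $PQ=Q$. Lemma \ref{l2.1} applied to the product $P\cdot Q$ gives
\[
\mu(s+u;Q)\leq\mu(s;P)\,\mu(u;Q)\qquad(s,u>0),
\]
and choosing $s\in(\tau(P),t)$ with $u=t-s$ kills $\mu(s;P)$ and forces $\mu(t;Q)=0$. The two ranges together exhaust $(0,\infty)$ and give the dichotomy $\mu(t;Q)\in\{0\}\cup[1,+\infty)$.

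For the symmetry $U=2Q-I$, the relation $U^{2}=4Q^{2}-4Q+I=I$ is immediate from $Q^{2}=Q$. Lemma \ref{l2.1} applied to the product $U\cdot U=I$ with $s=t$ yields
\[
1=\mu(2t;I)\leq \mu(t;U)^{2}\qquad(t>0),
\]
the equality $\mu(\cdot;I)\equiv 1$ being exactly where the infinite-trace character of the setting enters; hence $\mu(t;U)\geq 1$.

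The only non-routine ingredient is the structural decomposition $Q=P+Z$ imported from \cite{Bik16}; once that is in hand, everything reduces to bookkeeping between Lemma \ref{l2.1} and the explicit form of $\mu(\cdot;P)$ for a projection. I do not foresee any serious obstacle beyond arranging each application of Lemma \ref{l2.1} so that the nilpotent part $Z$ drops out (multiplying by $P$ on the left for $PQ=Q$, and through $Q^{*}$ for $QQ^{*}$).
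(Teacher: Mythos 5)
Your argument is correct, and its core differs from the paper's in an interesting way. The paper splits into two cases: for $Q\notin S_0(\mathcal{M},\tau)$ it simply cites \cite[Lemma 3.8]{Bik15}, and for $Q\in S_0(\mathcal{M},\tau)$ it gets the lower bound from the identity $QP=P$ together with the submultiplicativity of Lemma \ref{l2.1}, namely $1=\mu(s+t;QP)\leq\mu(s;P)\,\mu(t;Q)$ for $s+t\leq\tau(P)$, followed by $s\to 0+$ and right continuity at the endpoint. You instead extract the operator inequality $QQ^{*}=P+ZZ^{*}\geq P$ from the decomposition $Q=P+Z$ and pass it through monotonicity of $\mu$ on the positive cone plus $\mu(t;QQ^{*})=\mu(t;Q)^{2}$; both of these are standard facts from \cite{FK86} but are not among the tools the paper explicitly sets up, whereas the paper's route uses only the already-stated Lemma \ref{l2.1}. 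What your version buys is a unified treatment: the case $\tau(P)=+\infty$ (equivalently $Q\notin S_0(\mathcal{M},\tau)$) is covered by the same inequality, so the external citation to \cite{Bik15} becomes unnecessary. Note also that since $\mu(t;P)=\chi_{(0,\tau(P)]}(t)$ equals $1$ at $t=\tau(P)$ as well, your monotonicity argument already handles the endpoint without the paper's appeal to right continuity. The remaining pieces --- killing $\mu(t;Q)$ for $t>\tau(P)$ via $PQ=Q$ and Lemma \ref{l2.1}, and the bound $1=\mu(2t;U^{2})\leq\mu(t;U)^{2}$ for the symmetry --- coincide with the paper's proof.
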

\begin{proof}
For $Q=Q^2\notin S_0(\mathcal{M}, \tau)$ we have $\mu (t; Q) \geq 1$  for all $t>0$, see \cite[Lemma 3.8]{Bik15}.
Let  $Q=Q^2\in S_0(\mathcal{M}, \tau)$ and $P$ be ``the range'' projection of the idempotent $Q$, see the proof
of  Lemma \ref{l3.4}. Since $QP=P$ and $P\in \cP (\cM )\bigcap \cF (\mathcal{M}, \tau)$, by 
 Lemma \ref{l2.1} we have
$$
1 =\mu (s+t; P)=\chi_{(0, \tau (P)]} (s+t)=\mu (s+t; QP) \leq  \mu (s; P) \mu (t; Q)= \mu (t; Q) 
$$ 
 for all $s, t>0$ with $s+t\leq \tau (P)$. By tending $s$ to $0+$, we obtain
 $\mu (t; Q) \geq 1$  for all $0<t<\tau (P)$. By the right continuity of the function $\mu (t; \cdot )$ we have
$\mu (\tau (P); Q) \geq 1$. 
If $t> \tau (P)$ then $\mu (t; P)=0$; by the equality $PQ=Q$ and by  
 Lemma \ref{l2.1} we obtain
$$
0\leq \mu (t; Q) =\mu (t; PQ) \leq \mu (t-\varepsilon ; P) \mu (\varepsilon ; Q) =0
$$
for all $\varepsilon >0$ with $t-\varepsilon >\tau ( P)$. 

Let $Q\in S(\mathcal{M}, \tau)$ be such that $Q^2=Q$. For the symmetry $U=2Q-I$ we have $U^2=I$ and
 by Lemma \ref{l2.1} obtain
$$
1=\mu (2t; I)=\mu (2t; U^2) \leq \mu (t; U)  \mu (t; U) = \mu (t; U) ^2
$$
for all $t>0$.
\end{proof}

Note that for  $Q\in \mathcal{M}$ such that $Q^2=Q$ the relation 
 $\mu (t; Q)\in \{0\}\bigcup [1, \|Q \|]$ for all $t>0$
was obtained by another way in  \cite[item 1) of Lemma 3.8]{Bik06}. Theorem \ref{t3.7} gives the positive answer to the question by Daniyar Mushtari of year 2010.

\vskip 2mm

{\bf Acknowledgements.}  The work performed under the development program of Volga Region Mathematical Center (agreement no. 075-02-2022-882).

\vskip 5mm


\begin{thebibliography}{99}

\bibitem{Ber72}
S.K.~Berberian,  Baer ${}^*$-rings. Die Grundlehren der mathematischen Wissenschaften, Band 195. Springer-Verlag, New York-Berlin, 1972.

\bibitem{Ber82} S.K.~Berberian, The algebra of thin operators is directly finite, Publ. Sec. Mat. Univ. Aut\`onoma Barcelona 26 (2) (1982) 5--7.

\bibitem{Bik06}
 A.M.~Bikchentaev, Local convergence in measure on semifinite von Neumamn algebras, Proc. Steklov Inst. Math. 2006, no. 4(255), 35--48. 

\bibitem{Bik14}
 A.M.~Bikchentaev, On normal $\tau$-measurable operators affiliated with semifinite von Neumann algebras,  Math. Notes 96 (3-4) (2014) 332--341. 

\bibitem{Bik15}
A.M.~Bikchentaev,  Concerning the theory of $\tau$-measurable operators affiliated to a semifinite von Neumann algebra,  Math. Notes 98 (3-4) (2015) 382--391. 

\bibitem{Bik16}
 A.M.~Bikchentaev, On idempotent $\tau$-measurable operators affiliated to a von Neumann algebra, Math. Notes 100 (3-4) (2016) 515--525.

\bibitem{Bik21}
 A.M.~Bikchentaev, On $\tau$-essentially invertibility of $\tau$-measurable operators, Internat. J. Theoret. Phys. 60 (2) (2021) 567--575.

\bibitem{Bik22}
A.M.~Bikchentaev, Essentially invertible measurable operators affiliated to a semifinite von Neumann algebra and commutators, Sib. Math. J. 63 (2) (2022) 224--232.

\bibitem{DP14}
P.~Dodds, B. de Pagter, 
Normed K\"{o}the spaces: A non-commutative viewpoint,
Indag. Math. (N.S.)  25 (2) (2014) 206--249. 

\bibitem{FK86} T.~Fack,  H.~Kosaki,   Generalized $s$-numbers of
$\tau$-measurable operators,  Pacific  J. Math.   123 (2) (1986) 269--300.

\bibitem{Goo79} 
K.R.~Goodearl,  von Neumann regular rings, Monographs and Studies in Mathematics, vol. 4. Pitman (Advanced Publishing Program), Boston, Mass.-London, 1979. 

\bibitem{Hal81}
I. Halperin, On a theorem of Sterling Berberian, C. R. Math. Rep. Acad. Sci. Canada 3 (1) (1981) 33--35.

\bibitem{Kap68} 
I.~Kaplansky, Rings of operators, W.A.~Benjamin, Inc., New York-Amsterdam, 1968.

 \bibitem{Nel74} E.~Nelson,  Notes on  non-commutative
 integration,  J. Funct. Anal.  15 (2)  (1974) 103--116.
 
\bibitem{Sai71}
K.~Sait\^o, On the algebra of measurable operators for a general $AW\sp{\ast} $-algebra. II. Tohoku Math. J. (2) 23 (3) (1971)  525--534. 

 \bibitem{Seg53}   I.E.~Segal,
  A non-commutative extension of abstract
 integration,  Ann. Math.   57 (3)  (1953) 401--457.

\bibitem{Tem10}  I.D.~Tembo, Invertibility in the algebra of $\tau$-measurable operators, in: Operator algebras, operator theory and applications, 245--256, Oper. Theory Adv. Appl., vol. 195, Birkh\"auser Verlag, Basel, 2010.

\bibitem{Tik87}  O.E.~Tikhonov, Continuity of operator functions in topologies connected with a trace on a von Neumann algebra, Soviet Math. (Iz. VUZ)  31 (1)  (1987) 110--114. 






\end{thebibliography}
\end{document}